\theoremstyle{plain} 
\newtheorem{theorem}{Theorem} 
\newtheorem{lemma}[theorem]{Lemma} 
\newtheorem{corollary}[theorem]{Corollary}
\theoremstyle{remark}
\newtheorem*{remark}{Remark}
\DeclareMathOperator{\mre}{Re}
\begin{document} 
\title[Projecting onto Helson matrices in Schatten classes]{Projecting onto Helson matrices \\ in Schatten classes} 
\date{\today} 

\author{Ole Fredrik Brevig} 
\address{Department of Mathematical Sciences, Norwegian University of Science and Technology (NTNU), NO-7491 Trondheim, Norway} 
\email{ole.brevig@math.ntnu.no}

\author{Nazar Miheisi} 
\address{Department of Mathematics, King’s College London, Strand, London WC2R 2LS, United Kingdom} 
\email{nazar.miheisi@kcl.ac.uk}

\begin{abstract}
	A Helson matrix is an infinite matrix $A = (a_{m,n})_{m,n\geq1}$ such that the entry $a_{m,n}$ depends only on the product $mn$. We demonstrate that the orthogonal projection from the Hilbert--Schmidt class $\mathcal{S}_2$ onto the subspace of Hilbert--Schmidt Helson matrices does not extend to a bounded operator on the Schatten class $\mathcal{S}_q$ for $1 \leq q \neq 2 < \infty$. In fact, we prove a more general result showing that a large class of natural projections onto Helson matrices are unbounded in the $\mathcal{S}_q$-norm for $1 \leq q \neq 2 < \infty$. Two additional results are also presented. 
\end{abstract}

\subjclass[2010]{Primary 47B35. Secondary 47B10}

\maketitle

\section{Introduction}
Let $\gamma = (\gamma_k)_{k\geq0}$ be a sequence of complex numbers. A \emph{Hankel matrix} is an infinite matrix of the form 
\begin{equation}\label{eq:Hankel} 
	H_\gamma = \left(\gamma_{i+j}\right)_{i,j\geq0}. 
\end{equation}
We consider the matrices \eqref{eq:Hankel} as linear operators on $\ell^2(\mathbb{N}_0)$, where $\mathbb{N}_0=\{0,1,2,\dots\}$. The multiplicative analogues of Hankel matrices --- that is, matrices whose entries depend on the product rather than the sum of the coordinates --- are known as \emph{Helson matrices}. To be precise, a Helson matrix is an infinite matrix of the form 
\begin{equation}\label{eq:Helson} 
	M_\varrho = \left(\varrho_{mn}\right)_{m,n\geq1} 
\end{equation}
for some sequence of complex numbers $\varrho = (\varrho_k)_{k\geq1}$. In this case, we consider the matrices \eqref{eq:Helson} as linear operators on $\ell^2(\mathbb{N})$, where $\mathbb{N}=\{1,2,3,\dots\}$. 

Helson matrices, whose study was initiated in the papers \cite{Helson06, Helson10}, play a similar role in the analysis of Dirichlet series as (additive) Hankel matrices play in the analysis of holomorphic functions in the unit disk. As such, questions regarding whether or not classical results for Hankel matrices can be extended to the multiplicative setting have attracted considerable recent attention (see e.g.~\cite{BP15, MP18, OCS12, PP218}). This note deals with one such question.

Recall that a compact operator $A\colon\ell^2\to\ell^2$ is in the Schatten class $\mathcal{S}_q$ if its sequence of singular values $s(A)=(s_k(A))_{k\geq0}$ is in $\ell^q$ and in this case
\[\|A\|_{\mathcal{S}_q}=\|s(A)\|_{\ell^q}.\]
Note that the Hilbert--Schmidt class $\mathcal{S}_2$ is a Hilbert space with inner product 
\begin{equation}\label{eq:HS} 
	\langle A, B \rangle = \operatorname{Tr}(AB^\ast) = \sum_{i=0}^\infty \sum_{j=0}^\infty a_{i,j} \overline{b_{i,j}}. 
\end{equation}
The \emph{averaging projection} $P$ onto the set of Hankel matrices is defined by 
\begin{equation}\label{proj.Hankel} 
	P\colon \left(a_{i,j}\right)_{i,j\geq0} \mapsto H_\gamma, \qquad \gamma_k = \frac{1}{k+1}\sum_{i+j=k}a_{i,j}. 
\end{equation}
It is not hard to see that the restriction of $P$ to $\mathcal{S}_2$ is the orthogonal projection onto the subspace of Hilbert--Schmidt Hankel matrices. A well-known result due to Peller \cite{Peller82} (see also \cite[Ch.~6.5]{Peller03}) is that the averaging projection $P$ is bounded on the Schatten class $\mathcal{S}_q$ for every $1<q<\infty$. 

The main purpose of this note is to show that the analogous statement for Helson matrices is false. We therefore define the \emph{averaging projection} $\mathcal{P}$ onto Helson matrices by 
\begin{equation}\label{proj.Helson} 
	\mathcal{P}\colon \left(a_{m,n}\right)_{m,n\geq1} \mapsto M_\varrho, \qquad \varrho_k = \frac{1}{d(k)}\sum_{mn=k} a_{m,n}, 
\end{equation}
where $d(k)$ denotes the number of divisors of the integer $k$. As before it is clear that the restriction of $\mathcal{P}$ to $\mathcal{S}_2$ is the orthogonal projection onto the subspace of Hilbert--Schmidt Helson matrices. Our first result is the following: 
\begin{theorem}\label{thm.1} 
	The projection $\mathcal{P}$ is unbounded on $\mathcal{S}_q$ for every $1\leq q\neq 2 < \infty$. 
\end{theorem}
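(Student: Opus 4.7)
The strategy is to use the truncated identity matrix $A_N$ (with $A_N[i,j] = \delta_{i,j}\mathbf{1}[i \leq N]$) as a direct counterexample for every $q \in (2,\infty)$, and to deduce the range $q \in [1,2)$ by a duality argument.

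First I would verify that $\mathcal{P}$ is self-adjoint with respect to the inner product \eqref{eq:HS} on finite-rank matrices. This follows by grouping both $\langle \mathcal{P}(A), B \rangle$ and $\langle A, \mathcal{P}(B) \rangle$ by the hyperbolic fibres $\{(m,n) : mn = k\}$, after which both quantities reduce to the same expression $\sum_k d(k)^{-1}\bigl(\sum_{mn=k} A[m,n]\bigr)\bigl(\sum_{m'n'=k}\overline{B[m',n']}\bigr)$. Combined with the trace dualities $(\mathcal{S}_q)^* = \mathcal{S}_{q'}$ for $1 < q < \infty$ and $(\mathcal{S}_1)^* = \mathcal{B}(\ell^2)$, this reduces the theorem to showing that $\mathcal{P}$ is unbounded on $\mathcal{S}_q$ for each $q \in (2, \infty)$ and in the operator norm.

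Next I would compute $\mathcal{P}(A_N)$ directly from \eqref{proj.Helson}. The inner sum $\sum_{m'n'=k} A_N[m',n']$ counts integers $j \leq N$ with $j^2 = k$, so it equals $1$ when $k = j^2$ for some $j \in \{1,\ldots,N\}$ and $0$ otherwise. Hence $\mathcal{P}(A_N) = M_{\varrho_N}$ with $\varrho_N(k) = d(k)^{-1}$ on these perfect squares and zero elsewhere. To bound the operator norm of $\mathcal{P}(A_N)$ from below, I would apply it to the basis vector $e_1$ and estimate
\[
\|\mathcal{P}(A_N) e_1\|_{\ell^2(\mathbb{N})}^2 = \sum_{j=1}^{N} d(j^2)^{-2} \geq \frac{1}{9}\,\#\{p \leq N : p \text{ prime}\},
\]
using the crucial fact that $d(p^2) = 3$ for every prime $p$. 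The prime number theorem then gives the lower bound $cN(\log N)^{-1}$ for some constant $c > 0$ and all sufficiently large $N$.

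Since $\|A_N\|_{\mathcal{S}_q} = N^{1/q}$ and $\|\mathcal{P}(A_N)\|_{\mathcal{S}_q} \geq \|\mathcal{P}(A_N) e_1\|_{\ell^2}$, for every $q \in (2,\infty)$ the ratio $\|\mathcal{P}(A_N)\|_{\mathcal{S}_q}/\|A_N\|_{\mathcal{S}_q}$ grows at least like $c'\,N^{1/2 - 1/q}(\log N)^{-1/2}$ and thus tends to infinity; the analogous estimate in operator norm is immediate since $\|A_N\| = 1$ there. The duality reduction from the first paragraph then covers the range $q \in [1,2)$. No substantial obstacle is anticipated: the computation is elementary once one recognises that the primes provide an abundance of "nice" indices where the divisor-function denominator in $\mathcal{P}$ does not suppress the mass of the image. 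The only delicate point is making the duality reduction at the endpoint $q = 1$ fully rigorous, which is handled cleanly by working throughout on finite-rank test matrices, where $\mathcal{P}$ is defined unambiguously.
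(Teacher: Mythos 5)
Your proposal is correct, but it takes a genuinely different route from the paper. The paper never attacks Theorem~\ref{thm.1} directly: it deduces it as the special case $\alpha=\beta=1$ of Theorem~\ref{thm.2}, which is proved by exploiting the tensor decomposition $\ell^2(\mathbb{N})\simeq\bigotimes_p\ell^2(\langle p\rangle)$, establishing via explicit $3\times3$ test matrices a uniform bound $\|P_\varphi\|_{\mathcal{S}_q\to\mathcal{S}_q}\geq1+\delta_q$ for \emph{every} weighted averaging projection onto Hankel matrices (Lemma~\ref{lem:1p2}), and then multiplying this defect over the first $N$ primes using the cross-norm identity \eqref{cross-norm} to get $\|\mathcal{P}_\Phi\|_{\mathcal{S}_q\to\mathcal{S}_q}\geq(1+\delta_q)^N\to\infty$. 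You instead exhibit one concrete family of counterexamples, the truncated identities $A_N$, and your argument is specific to the unweighted projection $\mathcal{P}$: the fibre sums of $A_N$ concentrate on perfect squares, and the decisive arithmetic input is $d(p^2)=3$ for primes $p$, so the denominator in \eqref{proj.Helson} fails to damp the first column of $\mathcal{P}A_N$; your computations check out --- $\mathcal{P}A_N=M_{\varrho_N}$ with $\varrho_N(j^2)=1/d(j^2)$ for $j\leq N$, hence $\|\mathcal{P}A_N e_1\|_{\ell^2}^2\geq\pi(N)/9$ (Chebyshev's bound suffices here; you do not need the full prime number theorem), while $\|A_N\|_{\mathcal{S}_q}=N^{1/q}$, giving divergence of the ratio for every $2<q<\infty$ and in operator norm --- and the self-adjointness of $\mathcal{P}$ with respect to \eqref{eq:HS} correctly transfers unboundedness to $1\leq q<2$ via $(\mathcal{S}_q)^\ast\simeq\mathcal{S}_{q'}$ and $(\mathcal{S}_1)^\ast\simeq\mathcal{B}(\ell^2)$, exactly parallel to the duality mechanism the paper itself uses in Corollary~\ref{duality}. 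What your approach buys: it is more elementary (no infinite tensor products, no uniform lemma over all weights) and quantitative, showing the norm of $\mathcal{P}$ on size-$N$ truncations grows at least like a constant times $N^{1/2-1/q}(\log N)^{-1/2}$. What it loses: it does not yield Theorem~\ref{thm.2}, since for a general multiplicative weight the value $\varrho_{j^2}=\Phi(j,j)$ need not be bounded below, whereas the paper's method handles all such weights at once. One point of hygiene: in the duality step you should work with \emph{finitely supported} matrices rather than finite-rank ones (a rank-one matrix can have infinitely many nonzero entries, and the fibre-by-fibre rearrangement then requires a convergence justification); this costs nothing, since each fibre $\{(m,n):mn=k\}$ is finite, so $\mathcal{P}$ maps finitely supported matrices to finitely supported matrices, these are dense in $\mathcal{S}_q$ for $q<\infty$, and your test matrices $A_N$ are finitely supported anyway.
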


Although the natural projection $P$ given by \eqref{proj.Hankel} is unbounded on $\mathcal{S}_1$, there do exist bounded projections onto the trace class Hankel operators. Let $\varphi\colon\mathbb{N}\times \mathbb{N} \to \mathbb{R}$ be a non-negative function such that for every integer $k\geq0$ it holds that 
\begin{equation}\label{avg.conditions} 
	\sum_{i+j=k} \varphi(i,j) = 1.
\end{equation}
Consider the \emph{weighted averaging projection} $P_\varphi$ defined by 
\begin{equation}\label{proj.Hankel3} 
	P_\varphi\colon \left(a_{i,j}\right)_{i,j\geq0} \mapsto H_\gamma, \qquad \gamma_k = \sum_{i+j=k}\varphi(i,j) a_{i,j}. 
\end{equation}
The condition \eqref{avg.conditions} ensures that $P_\varphi$ is indeed a projection. For $\alpha\geq1$, consider
\[\frac{1}{(1-z)^\alpha} = \sum_{j=0}^\infty c_\alpha(j) z^j, \qquad c_\alpha(j) = \binom{j+\alpha-1}{j}.\]
The weight $\varphi_{\alpha,\beta}(i,j) = c_\alpha(i)c_\beta(j) / c_{\alpha+\beta}(i+j)$ satisfies the condition \eqref{avg.conditions} and the projection $P_{\varphi_{\alpha,\beta}}$ is bounded on $\mathcal{S}_1$ if $\alpha,\beta>1$ (see \cite[Ch.~6.5]{Peller03} and \cite{BW86}). Note that the averaging projection \eqref{proj.Hankel} corresponds to the endpoint case $\alpha=\beta=1$.

It is natural to ask if we can similarly find a weighted averaging projection onto Helson matrices which is bounded in $\mathcal{S}_q$ for some $1 \leq q\neq 2 < \infty$. We will show that if the weight function is multiplicative (see Section \ref{multiplicative} for the definition), this question has a negative answer.
\begin{theorem}\label{thm.2} 
	Let $\Phi\colon \mathbb{N}\times \mathbb{N}\to \mathbb{R}$ be a non-negative multiplicative function such that for every integer $k\geq 1$ it holds that 
	\begin{equation}\label{eq:avgHelson} 
		\sum_{mn=k} \Phi(m,n) = 1. 
	\end{equation}
	Define the weighted projection $\mathcal{P}_\Phi$ by 
	\begin{equation}\label{proj.Helson3} 
		\mathcal{P}_\Phi\colon \left(a_{m,n}\right)_{m,n\geq1} \mapsto M_\varrho, \qquad \varrho_k = \sum_{mn=k} \Phi(m,n) a_{m,n}. 
	\end{equation}
	Then $\mathcal{P}_\Phi$ is unbounded on $\mathcal{S}_q$ for every $1\leq q\neq 2 < \infty$. 
\end{theorem}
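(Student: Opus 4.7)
My plan is to approach Theorem~\ref{thm.2} through a duality argument combined with the prime-by-prime factorization furnished by multiplicativity. With respect to the Hilbert--Schmidt pairing \eqref{eq:HS}, a direct computation shows that the adjoint of $\mathcal{P}_\Phi$ is given by $((\mathcal{P}_\Phi)^\ast B)_{m,n} = \Phi(m,n)\sum_{rs=mn} b_{r,s}$, so $\mathcal{P}_\Phi$ is bounded on $\mathcal{S}_q$ if and only if $(\mathcal{P}_\Phi)^\ast$ is bounded on $\mathcal{S}_{q'}$, where $1/q + 1/q' = 1$. As test matrices I would use the divisor-indicator Helson matrices $E_k := (\chi_{mn=k})_{m,n\geq 1}$: since $d\mapsto k/d$ is an involution on the divisors of $k$, each $E_k$ acts as a partial permutation whose nonzero singular values are all equal to $1$, so $\|E_k\|_{\mathcal{S}_{q'}} = d(k)^{1/q'}$. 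A short calculation gives $(\mathcal{P}_\Phi)^\ast E_k = d(k)\,F_k$ with $F_k := (\Phi(m,n)\chi_{mn=k})_{m,n}$, and this $F_k$ is a weighted partial permutation with singular values $\Phi(m,n)$ ($mn=k$), so $\|F_k\|_{\mathcal{S}_{q'}}^{q'} = \sum_{mn=k}\Phi(m,n)^{q'}$.

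Boundedness of $\mathcal{P}_\Phi$ on $\mathcal{S}_q$ therefore forces the ratio
\[
R_k \;:=\; d(k)^{1/q}\Bigl(\sum_{mn=k}\Phi(m,n)^{q'}\Bigr)^{1/q'}
\]
to be bounded in $k$. Using the multiplicativity of $\Phi$ and of the divisor function, with the shorthand $\varphi_p(i,j) := \Phi(p^i, p^j)$, I would factor
\[
R_k \;=\; \prod_{p\mid k}(v_p(k)+1)^{1/q}\Bigl(\sum_{i+j=v_p(k)}\varphi_p(i,j)^{q'}\Bigr)^{1/q'},
\]
and Jensen's inequality applied with the normalization $\sum_{i+j=r}\varphi_p(i,j)=1$ shows that each local factor is at least $1$, with equality precisely when $\varphi_p$ is uniform at level $v_p(k)$.

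To conclude, I would argue by cases. If $\Phi$ equals the uniform weight $1/d(mn)$, then $\mathcal{P}_\Phi = \mathcal{P}$ and Theorem~\ref{thm.1} applies directly. Otherwise I would look for a sequence $k_n$ along which the product $R_{k_n}$ diverges: if some $\varphi_{p_0}$ is non-uniform at arbitrarily large levels with local factor bounded away from $1$, take $k_n = p_0^{r_n}$ with $r_n\to\infty$; if instead there are infinitely many primes at which $\varphi_p$ is non-uniform at a fixed level with uniformly positive excess over $1$, let $k_n$ be a product over many such primes. For the residual scenario that $\Phi$ agrees with the uniform weight off a finite set $S$ of primes, I would restrict to matrices supported on pairs of integers coprime to $S$: the restriction of $\mathcal{P}_\Phi$ is then the uniform averaging projection on this sub-semigroup, to which the proof of Theorem~\ref{thm.1} applies verbatim.

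The main obstacle will be the borderline scenario where $\Phi$ is non-uniform at infinitely many primes but the local factors tend to $1$ so rapidly that $R_k$ nevertheless stays bounded (for instance $\varphi_p(1,0)=1/2+\varepsilon_p$ with $\sum \varepsilon_p^{2}<\infty$ and $\varphi_p$ uniform at all other levels). In such cases the divisor-indicator test alone is insufficient, and one must either refine it to a more delicate linear combination of $E_k$'s that amplifies cumulative small deviations, or run a perturbation argument showing that $\mathcal{P}_\Phi - \mathcal{P}$ is bounded on $\mathcal{S}_q$ for such nearly uniform $\Phi$, so that unboundedness from Theorem~\ref{thm.1} transfers.
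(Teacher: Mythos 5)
Your reduction to the local ratios $R_k$ is correct as far as it goes: the formula for $(\mathcal{P}_\Phi)^\ast$, the computation $\|E_k\|_{\mathcal{S}_{q'}} = d(k)^{1/q'}$, the identity $(\mathcal{P}_\Phi)^\ast E_k = d(k)F_k$, and the conclusion that boundedness of $\mathcal{P}_\Phi$ on $\mathcal{S}_q$ forces $\sup_k R_k < \infty$ all check out. But the argument has a genuine gap, which you have in fact identified yourself: the test family $E_k$ gives $R_k \equiv 1$ for the uniform weight $\Phi(m,n) = 1/d(mn)$, and $R_k$ stays bounded for near-uniform weights with square-summable deviations, so the test is blind in exactly the hardest cases. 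Your two escape routes both fail as stated. Invoking Theorem~\ref{thm.1} is circular: in this paper Theorem~\ref{thm.1} \emph{is} the special case $\alpha=\beta=1$ of Theorem~\ref{thm.2} and has no independent proof, so the uniform case (and the ``uniform off a finite set of primes'' case, where you propose applying ``the proof of Theorem~\ref{thm.1} verbatim'') is precisely what remains to be proved. The perturbation route is also unsubstantiated: the difference $\mathcal{P}_\Phi - \mathcal{P}$ has signed weights that are no longer multiplicative, so the tensor factorization underpinning everything is destroyed, and no mechanism is offered for proving its boundedness on $\mathcal{S}_q$; since both operators turn out to be unbounded, nothing a priori constrains their difference.

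The structural reason the $E_k$ test cannot be repaired by more of the same is that it only measures the deviation of $\Phi$ from the orthogonal ($\mathcal{S}_2$) projection --- Jensen's inequality is an equality for the uniform weight at every level --- whereas the theorem is really about the incompatibility of \emph{any} averaging with the $\mathcal{S}_q$-geometry when $q \neq 2$. The paper supplies exactly the missing local ingredient: Lemma~\ref{lem:1p2} shows, via explicit $3\times 3$ test matrices whose singular values interact nontrivially with the $\mathcal{S}_q$-norm (the matrices $A(t)$, $B(t)$, $C(t)$ there, which have \emph{unequal} singular values, unlike your flat partial permutations $E_k$), that \emph{every} weighted averaging projection $P_\varphi$ onto Hankel matrices --- including the uniform one --- satisfies $\|P_\varphi\|_{\mathcal{S}_q\to\mathcal{S}_q} \geq 1+\delta_q$ with $\delta_q > 0$ depending only on $q$. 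The tensor factorization $\mathcal{P}_\Phi A = \bigotimes_p P_{\varphi_p}A_p$, which you implicitly use in factoring $R_k$ over primes and which the paper justifies via Lemma~\ref{lem:conv} and the cross-norm identity \eqref{cross-norm}, then amplifies this local bound to $\|\mathcal{P}_\Phi\|_{\mathcal{S}_q\to\mathcal{S}_q} \geq (1+\delta_q)^N$ for every $N$, with no case analysis at all. To complete your argument along its present lines you would need a replacement for the $E_k$ witnessing a norm ratio uniformly bounded below by $1+\delta$ for \emph{arbitrary} local weights; that is precisely the content of Lemma~\ref{lem:1p2}, and it is the one idea your proposal is missing.
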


The Riemann zeta function can be represented, when $\mre{s}>1$, by an absolutely convergent Dirichlet series or by an absolutely convergent Euler product, 
\begin{equation}\label{eq:Riemannzeta} 
	\zeta(s) = \sum_{n=1}^\infty n^{-s} = \prod_p (1-p^{-s})^{-1}. 
\end{equation}
The Euler product is taken over the increasing sequence of prime numbers. For $\alpha\geq1$, the general divisor function $d_\alpha(n)$ is defined by
\[\zeta^\alpha(s) = \sum_{n=1}^\infty d_\alpha(n) n^{-s}.\]
Note that $d_2$ is the usual divisor function $d$ appearing in the projection \eqref{proj.Helson}. One family of weights that satisfy the assumptions of Theorem~\ref{thm.2} are
\[\Phi_{\alpha,\beta}(m,n) = \frac{d_\alpha(m)d_\beta(n)}{d_{\alpha+\beta}(mn)}\]
for $\alpha,\beta\geq1$. Note that the averaging projection \eqref{proj.Helson} again is equal to the endpoint case $\alpha=\beta=1$, and hence Theorem~\ref{thm.1} is a special case of Theorem~\ref{thm.2}.

\subsection*{Organization} The present note is organized into four sections. In Section~\ref{sec:prelim} we collect some preliminary material on infinite tensor products and multiplicative matrices. Section~\ref{sec:proof} is devoted to the proof of Theorem~\ref{thm.2}. The final section contains two additional results. The first is that there are no bounded projections from the spaces of compact and bounded operators to Helson matrices, while the second is a corollary of Theorem~\ref{thm.1} showing that the usual duality relation between Hankel matrices in $\mathcal{S}_q$ does not extend to Helson matrices.

\section{Infinite tensor products and multiplicative matrices} \label{sec:prelim}
In the present section we seek to represent $\ell^2(\mathbb{N})$ as an infinite tensor product of $\ell^2(\mathbb{N}_0)$. We will then discuss multiplicative matrices, with particular emphasis on Helson matrices. Our presentation and notation is inspired by \cite{Hilberdink17}. 

\subsection{Tensor product representation of $\ell^2(\mathbb{N})$} \label{sec:tensor}

For each prime $p$, consider the index set $\langle p \rangle = \{p^\kappa \,:\, \kappa \in \mathbb{N}_0\}$. It evidently holds that $\ell^2(\mathbb{N}_0) \simeq \ell^2(\langle p \rangle)$ through the obvious mapping. Note also that $\ell^2(\langle p \rangle)$ is a natural subspace of $\ell^2(\mathbb{N})$ since $\langle p \rangle \subseteq \mathbb{N}$. Let $(e_k)_{k\geq 1}$ (resp. $(e_k)_{k\geq0}$) denote the standard orthonormal basis of $\ell^2(\mathbb{N})$ (resp. $\ell^2(\mathbb{N}_0)$). Then $(e_{p^\kappa})_{\kappa\geq0}$ is an orthonormal basis of $\ell^2(\langle p \rangle)$; throughout we will identify each operator on $\ell^2(\langle p \rangle)$ with its matrix in this basis.

Let $\bigotimes_{p \leq p_N} \ell^2(\langle p \rangle)$ denote the Hilbert space tensor product of $\ell^2(\langle p \rangle)$ over the first $N$ primes. The linear extension of the map
\[\otimes_{p\leq p_N} x_p \mapsto \left( \otimes_{p\leq p_N} x_p \right)\otimes e_1\]
gives an embedding of $\bigotimes_{p \leq p_N} \ell^2(\langle p \rangle)$ into $\bigotimes_{p \leq p_{N+1}} \ell^2(\langle p \rangle)$. The inductive limit of this system as $N\to\infty$ can be identified with the linear span of all elements of the form $\otimes_p x_p$ such that only finitely many of the $x_p \in \ell^2(\langle p \rangle)$ are different from $e_1$. We can endow the limit with an inner product by setting 
\begin{equation}\label{inner-prod} 
	\langle \otimes_p x_p, \otimes_p y_p \rangle = \prod_p \langle x_p, y_p \rangle 
\end{equation}
and extending linearly. The infinite tensor product $\bigotimes_p \ell^2(\langle p \rangle)$ is defined to be the completion of the inductive limit with respect to the norm induced by the inner product \eqref{inner-prod}. 

Consider the prime factorization 
\begin{equation}\label{eq:primefact} 
	n = \prod_{p} p^{\kappa_p} 
\end{equation}
and note that for every integer $n\geq1$, it holds that $\kappa_p=0$ for all but a finite number of primes $p$. In view of \eqref{eq:primefact}, we define a linear map from $\ell^2(\mathbb{N})$ to $\bigotimes_p \ell^2(\langle p \rangle)$ by setting
\[e_n \mapsto \otimes_p e_{p^{\kappa_p}}.\]
It is easily seen that this map extends to a unitary operator and thus allows us to make the identification 
\begin{equation}\label{tensor} 
	\ell^2(\mathbb{N}) \simeq \bigotimes_p \ell^2(\langle p \rangle). 
\end{equation}

For each prime number $p$, let $\mathcal{R}_p$ denote the orthogonal projection from $\ell^2(\mathbb{N})$ to $\ell^2(\langle p \rangle)$, i.e. the operator defined by 
\begin{equation}\label{eq:Rp} 
	\mathcal{R}_p e_n = 
	\begin{cases}
		e_n & \text{if } n=p^\kappa, \\
		0 & \text{otherwise}, 
	\end{cases}
\end{equation}
and extending linearly. For a matrix $A \colon \ell^2(\mathbb{N})\to \ell^2(\mathbb{N})$, set $A_p = \mathcal{R}_p A \mathcal{R}_p^\ast$. We consider $A_p$ an operator on $\ell^2(\langle p \rangle)$ and note that its matrix can be obtained by deleting all rows and columns of $A$ whose index is not a power of $p$. It evidently holds that $\|A_p\| \leq \|A\|$ and the same estimate holds also for the $\mathcal{S}_q$-norms. Note that if $A$ is the Helson matrix \eqref{eq:Helson} generated by the sequence $\varrho=(\varrho_k)_{k\geq1}$, then $A_p$ is the Hankel matrix \eqref{eq:Hankel} generated by $\gamma=(\gamma_\kappa)_{\kappa\geq0} = (\varrho_{p^\kappa})_{\kappa\geq0}$.

\subsection{Multiplicative functions} \label{multiplicative} A function $F\colon\mathbb{N}\to \mathbb{C}$ is said to be \emph{multiplicative} if $F(1)=1$ and
\[F(mn)=F(m)F(n)\]
whenever $m$ and $n$ are coprime. Similarly, a function of two variables $f\colon\mathbb{N}\times\mathbb{N} \to \mathbb{C}$ is called \emph{multiplicative} if $f(1,1)=1$ and
\[f(m_1n_1,m_2n_2)=f(m_1,m_2)f(n_1,n_2)\]
whenever $m_1m_2$ and $n_1n_2$ are coprime. If $F \colon \mathbb{N}\to\mathbb{C}$ is multiplicative, then $f(m,n)=F(mn)$ is evidently also multiplicative. We shall also have use of the following basic result, which is certainly not new. However, we include a short proof for the benefit of the reader.
\begin{lemma}\label{lem:conv} 
	If $f \colon \mathbb{N}\times \mathbb{N}\to\mathbb{C}$ is multiplicative, then the convolution
	\[F(k) = \sum_{mn=k} f(m,n)\]
	is also multiplicative. 
\end{lemma}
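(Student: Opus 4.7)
The plan is to prove multiplicativity directly from the definition by setting up a bijection between the divisor pairs of a product $k_1 k_2$ (with $\gcd(k_1,k_2)=1$) and pairs of divisor pairs of $k_1$ and $k_2$ respectively. First I would check the normalization $F(1) = f(1,1) = 1$, which is immediate since the only decomposition $mn=1$ is $m=n=1$.

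Next, fix coprime integers $k_1, k_2 \in \mathbb{N}$ and expand
\[F(k_1 k_2) = \sum_{mn = k_1 k_2} f(m,n).\]
Since $\gcd(k_1,k_2)=1$, every divisor $m$ of $k_1 k_2$ factors uniquely as $m = m_1 m_2$ with $m_1 \mid k_1$ and $m_2 \mid k_2$, and similarly $n = n_1 n_2$ with $n_1 \mid k_1$ and $n_2 \mid k_2$; moreover $mn = k_1 k_2$ forces $m_1 n_1 = k_1$ and $m_2 n_2 = k_2$. Conversely, any such quadruple $(m_1,n_1,m_2,n_2)$ with $m_1 n_1 = k_1$ and $m_2 n_2 = k_2$ produces a pair $(m,n) = (m_1 m_2, n_1 n_2)$ with $mn = k_1 k_2$. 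This establishes the bijection
\[\{(m,n) : mn = k_1 k_2\} \longleftrightarrow \{(m_1,n_1) : m_1 n_1 = k_1\} \times \{(m_2,n_2) : m_2 n_2 = k_2\}.\]

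Under this bijection, the products $m_1 n_1$ and $m_2 n_2$ are coprime (they divide $k_1$ and $k_2$ respectively), so the multiplicativity of $f$ gives
\[f(m_1 m_2, n_1 n_2) = f(m_1,n_1)\, f(m_2,n_2).\]
Substituting and splitting the sum yields
\[F(k_1 k_2) = \sum_{m_1 n_1 = k_1} \sum_{m_2 n_2 = k_2} f(m_1,n_1)\, f(m_2,n_2) = F(k_1)\, F(k_2),\]
which is exactly the multiplicativity of $F$. There is no real obstacle here; the only point requiring slight care is verifying the coprimality hypothesis needed to apply multiplicativity of $f$, which follows automatically from the coprimality of $k_1$ and $k_2$.
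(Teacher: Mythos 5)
Your proof is correct and takes essentially the same route as the paper's: both reindex the sum over $mn = k_1 k_2$ via the unique coprime factorization of the divisor pair (the paper writes $m = m_1 n_1$, $n = m_2 n_2$ with $m_1 m_2 = k_1$ and $n_1 n_2 = k_2$, which is your decomposition with the letters permuted) and then apply the two-variable multiplicativity of $f$ to split the sum. Your write-up is merely a bit more explicit than the paper's, spelling out the bijection, its uniqueness, and the normalization $F(1)=1$.
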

\begin{proof}
	Suppose that $k$ and $l$ are coprime. If $mn=kl$, then we can factor $m= m_1 n_1$ and $n = m_2 n_2$ such that $m_1 m_2 = k$ and $n_1 n_2 = l$. Clearly $m_1 m_2$ and $n_1 n_2$ are coprime, and so it holds that 
	\begin{align*}
		F(kl) = \sum_{mn = kl} f(m,n) &= \sum_{\substack{m_1 m_2 = k \\
		n_1 n_2 = l}} f(m_1 n_1, m_2 n_2) \\
		&= \sum_{\substack{m_1 m_2 = k \\
		n_1 n_2 = l}}f(m_1,m_2)f(n_1,n_2) = F(k)F(l) 
	\end{align*}
	as desired. 
\end{proof}

\subsection{Multiplicative matrices} \label{sec:multmat} For every prime $p$ let $A_p$ be a bounded linear operator on $\ell^2(\langle p \rangle)$. If $\prod_p \|A_p\|$ converges, and each of the sums 
\[\sum_p \big|\|A_p e_1\|-1\big| \qquad \text{and} \qquad \sum_p\big|\langle A_p e_1, e_1 \rangle -1\big|\]
also converge, then the infinite tensor product $\bigotimes_p A_p$ defines a bounded operator on $\bigotimes_p \ell^2(\langle p \rangle)$ (see e.g.~\cite[Prop.~6]{Guichardet69}). Suppose in addition that $A_p\in\mathcal{S}_q$ for each $p$, and $\bigotimes_p A_p\in\mathcal{S}_q$. Then as a consequence of \cite[Thm.~2.4]{Hilberdink17} we have that 
\begin{equation} \label{cross-norm}
	\Big\|\bigotimes_p A_p\Big\|_{\mathcal{S}_q} = \prod_p \|A_p\|_{\mathcal{S}_q}. 
\end{equation}
We remark that the identity \eqref{cross-norm} is also valid for the operator norm. By the identification \eqref{tensor} we can regard $A = \bigotimes_p A_p$ as an operator on $\ell^2(\mathbb{N})$.

A matrix $A = (a_{m,n})_{m,n\geq1}$ is called \emph{multiplicative} if there is a multiplicative function $f \colon \mathbb{N}\times \mathbb{N}\to\mathbb{C}$ such that $a_{m,n}=f(m,n)$. In the case $A = \bigotimes_p A_p$ discussed above, it is easily verified that $A$ is multiplicative if $\langle A_p e_1, e_1 \rangle=1$ for every $p$. Note that in this case, we also have $A_p = \mathcal{R}_p A \mathcal{R}_p^\ast$ where $\mathcal{R}_p$ is as in \eqref{eq:Rp}. Conversely, if $A$ is multiplicative, then we have $A = \bigotimes_p A_p$, where again $A_p = \mathcal{R}_p A \mathcal{R}_p^\ast$.

Returning to the case of Helson matrices, we find that a Helson matrix $M_\varrho$ is multiplicative if and only if $\varrho_k = F(k)$ for a multiplicative function $F$. As mentioned in Section~\ref{sec:tensor}, in this case $\mathcal{R}_p M_\varrho \mathcal{R}_p^\ast = H_{\gamma}$ where $\gamma_j = F(p^j)$.

\section{Proof of Theorem~\ref{thm.2}} \label{sec:proof}
The proof of Theorem~\ref{thm.2} is inspired by the counter-examples to Nehari's theorem for Helson matrices constructed in \cite{BP15,OCS12}. We will demonstrate that any weighted averaging projection \eqref{proj.Hankel3} onto Hankel matrices cannot be contractive on $\mathcal{S}_q$ for $1 \leq q \neq 2 < \infty$. Specifically, we will prove that there is a universal lower bound for the norm of $P_\varphi$ on $\mathcal{S}_q$ which is strictly greater than $1$. 

If $\Phi$ is multiplicative, then the projection $\mathcal{P}_\Phi A$ given by \eqref{proj.Helson3} will preserve the tensor structure $A = \bigotimes_p A_p$ of a multiplicative matrix and factor into a tensor product of the projections $P_{\varphi_p} A_p$ given by \eqref{proj.Hankel3}, for some weight functions $\varphi_p$. The result will then follow from a standard argument.

Note that for the projection $P$ given by \eqref{proj.Hankel}, it is not hard to show, using Peller's criterion for Hankel operators of class $\mathcal{S}_q$ (see~\cite[Ch.~6.2]{Peller03}), that there is a constant $C$ such that 
\begin{align*}
	\|P\|_{\mathcal{S}_q\to\mathcal{S}_q} &\geq \frac{C}{\sqrt{q-1}} \intertext{as $q\to 1^+$. By a duality argument it also follows that as $q\to\infty$ we have} \|P\|_{\mathcal{S}_q\to\mathcal{S}_q} &\geq C \sqrt{q}. 
\end{align*}
In particular, the projection $P$ cannot be a contraction on $\mathcal{S}_q$ for $q$ sufficiently close to 1 or $q$ sufficiently large. The key point of the following result therefore is that this also holds for $q$ close to $2$ and that the lower bound holds uniformly for all weighted averaging projections.
\begin{lemma}\label{lem:1p2} 
	Fix $1\leq q\neq2 < \infty$. There exists some $\delta=\delta_q>0$ such that for every non-negative function $\varphi\colon\mathbb{N} \times \mathbb{N} \to \mathbb{R}$ satisfying \eqref{avg.conditions}, the weighted averaging projection $P_\varphi$ given by \eqref{proj.Hankel3} satisfies the bound $\|P_\varphi\|_{\mathcal{S}_q\to\mathcal{S}_q} \geq 1+\delta.$ 
\end{lemma}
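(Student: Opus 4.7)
The plan is to combine a single-antidiagonal test, which handles any weight $\varphi$ that deviates from uniformity, with a compactness argument on the space of admissible weights that promotes strict pointwise inequality to a uniform lower bound. Throughout, let $p = q/(q-1)$ denote the H\"older conjugate of $q$.

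\noindent \textbf{Step 1 (single antidiagonal test).} For each $k \geq 0$ I probe $P_\varphi$ with the matrix
\[
A_k \;=\; \sum_{i+j=k} \varphi(i,j)^{p-1} E_{i, k-i},
\]
which is supported on the single antidiagonal $k$. A direct computation (using that $A_k^\ast A_k$ is diagonal with entries $\varphi(i, k-i)^{2(p-1)}$) gives $\|A_k\|_{\mathcal{S}_q}^q = \sum_{i+j=k} \varphi(i,j)^p$, while $P_\varphi A_k = \bigl(\sum_{i+j=k} \varphi(i,j)^p\bigr) \sum_{i+j=k} E_{i, k-i}$ has $\mathcal{S}_q$-norm $(k+1)^{1/q} \sum_{i+j=k} \varphi(i,j)^p$. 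Hence
\[
\|P_\varphi\|_{\mathcal{S}_q \to \mathcal{S}_q} \;\geq\; (k+1)^{1/q} \Bigl(\sum_{i+j=k} \varphi(i,j)^p\Bigr)^{1/p},
\]
which, by the power-mean inequality applied on the simplex $\{\varphi \geq 0,\ \sum_{i+j=k}\varphi(i,j) = 1\}$, is always at least $1$, with equality if and only if $\varphi$ is uniform on antidiagonal $k$.

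\noindent \textbf{Step 2 (compactness and lower semicontinuity).} I equip the set $\Omega$ of all non-negative functions $\varphi$ satisfying \eqref{avg.conditions} with the product topology. Since each coordinate lives in $[0,1]$ and the constraints are closed, $\Omega$ is compact by Tychonoff. For any finite-rank $A$ the quotient $\varphi \mapsto \|P_\varphi A\|_{\mathcal{S}_q}/\|A\|_{\mathcal{S}_q}$ depends on $\varphi$ through only finitely many coordinates and is continuous; taking the supremum over finite-rank $A$ (which are dense in $\mathcal{S}_q$) shows $\varphi \mapsto \|P_\varphi\|_{\mathcal{S}_q \to \mathcal{S}_q}$ is lower semicontinuous. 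Hence the infimum $m = \inf_{\varphi \in \Omega} \|P_\varphi\|_{\mathcal{S}_q \to \mathcal{S}_q}$ is attained at some $\varphi_0 \in \Omega$, and it suffices to prove $\|P_{\varphi_0}\|_{\mathcal{S}_q \to \mathcal{S}_q} > 1$; setting $\delta_q = m - 1$ will then finish the proof.

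\noindent \textbf{Step 3 (strict inequality at the minimizer).} If $\varphi_0$ is not uniform on some antidiagonal, Step~1 supplies the strict inequality. Otherwise $\varphi_0 = \varphi^\ast$ is the uniform weight $\varphi^\ast(i,j) = 1/(i+j+1)$ and $P_{\varphi_0} = P$ is the classical averaging projection; we then need $\|P\|_{\mathcal{S}_q \to \mathcal{S}_q} > 1$. For $q$ near $1$ or $\infty$ this is provided by the Peller estimates already cited in the introduction. For the remaining $q$, I plan to exhibit a specific finite-rank multi-antidiagonal test matrix whose $\mathcal{S}_q$ norm is strictly smaller than the $\mathcal{S}_q$ norm of its Hankel average; the construction should be inspired by (and scaled in $N$ with $q$, as in) the Nehari-type counter-examples of \cite{BP15, OCS12}.

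\noindent \textbf{Main obstacle.} The technical heart of the argument is showing $\|P\|_{\mathcal{S}_q \to \mathcal{S}_q} > 1$ for the uniform weight $\varphi^\ast$ when $q$ lies in the intermediate range where Peller's asymptotic bounds become trivial — single-antidiagonal tests degenerate to ratio $1$ there, so one is forced into a genuine multi-antidiagonal construction. I expect the rank/size of the required test to grow without control as $q$ approaches $2$, which is consistent with $\delta_q \to 0$ as $q \to 2$.
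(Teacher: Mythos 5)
Your Steps 1--2 are essentially sound as scaffolding (modulo two small repairs: in the lower semicontinuity argument ``finite-rank'' should be ``finitely supported'' --- a rank-one matrix with full support depends on infinitely many coordinates of $\varphi$, while finitely supported matrices are dense in $\mathcal{S}_q$ for $q<\infty$; and for $q=1$ the exponent $p-1=\infty$ makes your test matrix undefined, though a single-entry test at the largest weight fixes this). But the proof has a genuine gap exactly where you flag it: Step 3 requires $\|P\|_{\mathcal{S}_q\to\mathcal{S}_q}>1$ for the uniform weight $\varphi^\ast$ and \emph{every} $1\leq q\neq 2<\infty$, and you only promise a construction (``I plan to exhibit\ldots''). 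Peller's asymptotics cover $q$ near $1$ and $q$ large but say nothing for, e.g., $q=3/2$ or $q=3$. Worse, your Step 1 inequality holds verbatim at $q=2$, where the lemma is false ($P_{\varphi^\ast}$ is an orthogonal projection there), so nothing in Steps 1--2 can distinguish $q\neq 2$: all of the $q$-dependence must enter through the unproved Step 3, which is precisely the hardest part of the lemma. As written, the argument is conditional on its own core.

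For comparison, the paper's proof dispenses with compactness and proves the uniform-in-$\varphi$ bound directly with \emph{fixed} $3\times3$ test matrices supported on antidiagonals $0$ and $2$, involving only $\varphi_0=\varphi(0,2)$, $\varphi_1=\varphi(1,1)$, $\varphi_2=\varphi(2,0)$. For $1\leq q<2$ it uses a rank-one matrix $B(t)$ with single singular value $\sqrt{1+t^2}$, whose average $D(\varphi_0 t)$ has two comparable singular values --- spreading singular values raises the $\ell^q$-norm when $q<2$ --- played against the limit bound $3^{1/q}\varphi_1$ from a diagonal test $A(t)$; since $\varphi_0+\varphi_1+\varphi_2=1$, the two bounds cannot both be small, and optimizing over $x=\max(\varphi_0,\varphi_2)$ gives an explicit $\delta_q>0$. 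For $q>2$ the roles reverse: the average of the diagonal $A(t)$ has top singular value at least $\sqrt{1+(\varphi_1 t)^2}$ --- concentration raises the norm when $q>2$ --- played against $(3/2)^{1/q}(\varphi_0+\varphi_2)$ from a cross-shaped test $C(t)$. In particular your expectation that the required test must grow without control as $q\to2$ is not borne out: two antidiagonals and $3\times3$ matrices suffice for all $q\neq2$, with $\delta_q\to0$ arising from the optimization, not from growing rank. If you wish to salvage your scheme, completing Step 3 at $\varphi^\ast$ amounts to exactly this two-antidiagonal computation --- at which point the compactness machinery becomes unnecessary, since the same computation already yields a bound uniform in $\varphi$.
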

The proof consists of three parts. We first compile some preliminary information. The two cases $1 \leq q < 2$ and $2<q<\infty$ will then be handled separately, but by fairly similar arguments. 
\begin{proof}
	For non-negative real numbers $t$ we will consider the following matrices:
	\[A(t) = 
	\begin{pmatrix}
		1 & 0 & 0 \\
		0 & t & 0 \\
		0 & 0 & 0 
	\end{pmatrix}
	\qquad B(t) = 
	\begin{pmatrix}
		1 & 0 & t \\
		0 & 0 & 0 \\
		0 & 0 & 0 
	\end{pmatrix}
	\]
	\[C(t) = 
	\begin{pmatrix}
		1 & 0 & t \\
		0 & 0 & 0 \\
		t & 0 & 0 
	\end{pmatrix}
	\qquad D(t) = 
	\begin{pmatrix}
		1 & 0 & t \\
		0 & t & 0 \\
		t & 0 & 0 
	\end{pmatrix}
	\]
	The singular values of $A(t)$ are $1$ and $t$, while $B(t)$ has only one singular value $\sqrt{1+t^2}$. A direct computation yields that the singular values of $C(t)$ are
	\[s(C(t)) = \left\{\frac{1}{2}+\sqrt{\frac{1}{4}+t^2},\,-\frac{1}{2}+\sqrt{\frac{1}{4}+t^2}\right\}.\]
	The same computation also yields that $s(D(t)) = s(C(t))\cup\{t\}$. We will only have need to refer to $\varphi(0,2)$, $\varphi(1,1)$ and $\varphi(2,0)$ and so for ease of notation we set
	\[\varphi_0 = \varphi(0,2), \qquad \varphi_1 = \varphi(1,1), \qquad \varphi_2 = \varphi(2,0).\]
	Recalling that $\varphi(0,0)=1$ we find that
	\[P_\varphi A(t) = D(\varphi_1 t), \qquad P_\varphi B(t) = D(\varphi_0 t), \qquad P_\varphi C(t) = D((\varphi_0 + \varphi_2) t ).\]
	
	Suppose that $1 \leq q < 2$. We consider $A(t)$ and find that 
	\begin{equation}\label{eq:1p2est1} 
		\|P_\varphi\|_{\mathcal{S}_q\to\mathcal{S}_q} \geq \lim_{t\to\infty} \frac{\|P_\varphi A(t)\|_{\mathcal{S}_q}}{\|A(t)\|_{\mathcal{S}_q}} = \lim_{t\to\infty} \frac{\|D(\varphi_1 t)\|_{\mathcal{S}_q}}{\|A(t)\|_{\mathcal{S}_q}}= 3^{1/q} \varphi_1. 
	\end{equation}
	We now consider $B(t)$. We estimate the $\mathcal{S}_q$-norm of $P_\varphi B(t) = D(\varphi_0 t)$ from below by considering only the two largest singular values, and noting that the largest is bounded below by $1$. Hence we obtain 
	\begin{equation}\label{eq:1p2est2} 
		\|P_\varphi\|_{\mathcal{S}_q\to\mathcal{S}_q} \geq \sup_{t\geq0} \frac{\|P_\varphi B(t)\|_{\mathcal{S}_q}}{\|B(t)\|_{\mathcal{S}_q}} \geq \sup_{t\geq0} \frac{\left(1+(\varphi_0 t)^q\right)^\frac{1}{q}}{\left(1+t^2\right)^\frac{1}{2}} \geq \big(1 + \varphi_0^\frac{2q}{2-q}\big)^\frac{2-q}{2q}, 
	\end{equation}
	where in the final estimate we chose $t = \varphi_0^{q/(2-q)}$. Considering the matrix transpose of $B(t)$ we see that the estimate \eqref{eq:1p2est2} also holds if $\varphi_0$ is replaced by $\varphi_2$. Recalling that $\varphi_0 + \varphi_1 + \varphi_2 = 1$, we conclude that $\varphi_1 \geq 1 -2x$ with $x = \max(\varphi_0,\varphi_2)$. Combining \eqref{eq:1p2est1} and \eqref{eq:1p2est2} we hence obtain the uniform lower bound
	\[\|P_\varphi\|_{\mathcal{S}_q\to\mathcal{S}_q} \geq \inf_{0 \leq x \leq 1} \max\left(3^{1/q}(1-2x), \big(1 + x^\frac{2q}{2-q}\big)^\frac{2-q}{2q} \right) = \big(1 + x_q^\frac{2q}{2-q}\big)^\frac{2-q}{2q},\]
	where $0<x_q<1$ denotes the unique positive solution of the equation 
	\begin{equation}\label{eq:1p2lb} 
		3^{1/q}(1-2x) = \big(1 + x^\frac{2q}{2-q}\big)^\frac{2-q}{2q}. 
	\end{equation}
	This completes the proof in the case $1 \leq q < 2$. 
	
	Next, we suppose that $2<q<\infty$. We consider $C(t)$ and after recalling that $P_\varphi C(t) = D((\varphi_0+\varphi_2)t)$, we obtain the lower bound 
	\begin{equation}\label{eq:1p2est3} 
		\|P_\varphi\|_{\mathcal{S}_q\to\mathcal{S}_q} \geq \lim_{t\to\infty} \frac{\|P_\varphi C(t)\|_{\mathcal{S}_q}}{\|C(t)\|_{\mathcal{S}_q}} = \left(\frac{3}{2}\right)^{1/q} (\varphi_0+\varphi_2). 
	\end{equation}
	We now consider $A(t)$ and estimate $P_\varphi A(t) = D(\varphi_1 t)$ from below by considering only the largest singular value and using a trivial inequality, to obtain
	\[\|P_\varphi A(t)\|_{\mathcal{S}_q} \geq \frac{1}{2}+\sqrt{\frac{1}{4}+(\varphi_1 t)^2} \geq \sqrt{1+(\varphi_1 t)^2}.\]
	Hence we find that 
	\begin{equation}\label{eq:1p2est4} 
		\|P_\varphi\|_{\mathcal{S}_q\to\mathcal{S}_q} \geq \sup_{t\geq0} \frac{\|P_\varphi A(t)\|_{\mathcal{S}_q}}{\|A(t)\|_{\mathcal{S}_q}} \geq \sup_{t\geq0} \frac{\left(1+(\varphi_1 t)^2\right)^\frac{1}{2}}{\left(1+t^q\right)^\frac{1}{q}} \geq \big(1 + \varphi_1^\frac{2q}{q-2}\big)^\frac{q-2}{2q}, 
	\end{equation}
	where we in the final estimate chose $t = \varphi_1^{2/(q-2)}$. Recalling that $\varphi_0+\varphi_2 = 1 - \varphi_1$ and setting $x=\varphi_1$, we combine \eqref{eq:1p2est3} and \eqref{eq:1p2est4} to obtain
	\[\|P_\varphi\|_{\mathcal{S}_q\to\mathcal{S}_q} \geq \inf_{0 \leq x \leq 1} \max\left(\left(\frac{3}{2}\right)^{1/q}(1-x), \big(1 + x^\frac{2q}{q-2}\big)^\frac{q-2}{2q} \right) = \big(1 + x_q^\frac{2q}{q-2}\big)^\frac{q-2}{2q},\]
	where $0<x_q<1$ denotes the unique positive solution of the equation
	\[\left(\frac{3}{2}\right)^{1/q}(1-x) = \big(1 + x^\frac{2q}{q-2}\big)^\frac{q-2}{2q}.\]
	This completes the proof in the case $2<q<\infty$. 
\end{proof}
\begin{remark}
	We can solve the equation \eqref{eq:1p2lb} for $q=1$ and obtain the explicit lower bound
	\[\|P_\varphi\|_{\mathcal{S}_1\to\mathcal{S}_1}\geq \frac{3}{35}\left(4\sqrt{11}-1\right) = 1.0514142138\ldots\]
	which holds for all weighted averaging projections \eqref{proj.Hankel3}.
\end{remark}
\begin{proof}[Proof of Theorem~\ref{thm.2}] 
	For each prime $p$ set $\varphi_p(i,j)=\Phi(p^i, p^j)$. Since $\Phi$ satisfies \eqref{eq:avgHelson}, we see that $\varphi_p$ satisfies \eqref{avg.conditions}. Suppose that $A = \bigotimes_p A_p$ is a multiplicative matrix. Since the weight $\Phi$ is also assumed to be multiplicative, we find by Lemma~\ref{lem:conv} that the sequence
	\[\varrho_k = \sum_{mn=k} \Phi(m,n)a_{m,n}\]
	is multiplicative. This means that $\mathcal{P}_\Phi A$ is a multiplicative Helson matrix, and since clearly $\mathcal{R}_p \mathcal{P}_\Phi A \mathcal{R}_p^\ast = P_{\varphi_p} A_p$ by the discussion in Section~\ref{sec:prelim}, we get that
	\[\mathcal{P}_\Phi A = \bigotimes_p P_{\varphi_p} A_p.\]
	Fix a positive integer $N$. For $p \leq p_N$, we choose $A_p$ such that $\|A_p\|_{\mathcal{S}_q}=1$ and $\|P_{\varphi_p}A_p\|_{\mathcal{S}_q}\geq 1+\delta$, where $\delta>0$ depends only on $1 \leq q \neq 2 < \infty$. Observe that as a consequence of Lemma~\ref{lem:1p2}, we can always make such a choice for $A_p$. For $p>p_N$ we choose $A_p=H_{e_0}$ so that $P_{\varphi_p} A_p = H_{e_0}$. We then obtain from \eqref{cross-norm} that
	\[\|\mathcal{P}_\Phi\|_{\mathcal{S}_q\to \mathcal{S}_q} \geq (1+\delta)^N.\]
	Then letting $N\to\infty$ we see that $\mathcal{P}_\Phi$ is unbounded on $\mathcal{S}_q$. 
\end{proof}
\begin{remark}
	The weights $\Phi_{\alpha,\beta}$ and $\varphi_{\alpha,\beta}$ discussed in the introduction are related as in the proof of Theorem~\ref{thm.2}. Indeed, inspecting the Euler product of the Riemann zeta function \eqref{eq:Riemannzeta} we find that $d_\alpha(p^j)=c_\alpha(j)$ for every prime $p$ and every $j\geq0$. 
\end{remark}

\section{Additional results}

\subsection{Projections on spaces of compact and bounded operators} Consulting Theorem~5.11 and Theorem~5.12 in \cite[Ch.~6.5]{Peller03}, we recall that there are no bounded projections $P_\varphi$ from the space of compact (resp. bounded) operators onto the space of compact (resp. bounded) Hankel matrices. It is trivial to extend this result to Helson matrices, and in this case we do not require the weight to be multiplicative. 
\begin{theorem}\label{thm:boundedcompact} 
	There are no bounded projections from the space of compact (resp. bounded) operators onto the space of compact (resp. bounded) Helson matrices. 
\end{theorem}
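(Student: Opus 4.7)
The plan is to deduce the theorem from Peller's corresponding results for Hankel matrices, namely Theorems~5.11 and 5.12 in \cite[Ch.~6.5]{Peller03}, via a compression-and-zero-extension argument based on a single prime. Fix any prime $p$ and let $\mathcal{R}_p$ be the projection from \eqref{eq:Rp}. I would argue by contradiction: assume that $\mathcal{Q}$ is a bounded projection from $\mathcal{K}(\ell^2(\mathbb{N}))$ onto the compact Helson matrices (respectively, from $\mathcal{B}(\ell^2(\mathbb{N}))$ onto the bounded Helson matrices). Define a candidate projection on operators on $\ell^2(\langle p \rangle) \simeq \ell^2(\mathbb{N}_0)$ by
\[ P(A) = \mathcal{R}_p \, \mathcal{Q}(\mathcal{R}_p^\ast A \mathcal{R}_p) \, \mathcal{R}_p^\ast. \]
Since $\mathcal{R}_p$ is a contraction, $P$ inherits boundedness from $\mathcal{Q}$, with $\|P\| \leq \|\mathcal{Q}\|$; and since $\mathcal{Q}$ preserves compactness (resp.~boundedness), so does $P$. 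Moreover, the identity $\mathcal{R}_p M_\varrho \mathcal{R}_p^\ast = H_\gamma$ with $\gamma_\kappa = \varrho_{p^\kappa}$ recalled at the end of Section~\ref{sec:tensor} shows that $P$ takes values in the Hankel matrices on $\ell^2(\mathbb{N}_0)$.

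The key observation is that the zero-extension $\mathcal{R}_p^\ast H_\gamma \mathcal{R}_p$ of a Hankel matrix is itself a Helson matrix on $\ell^2(\mathbb{N})$. Indeed, setting $\varrho_k = \gamma_\kappa$ when $k = p^\kappa$ and $\varrho_k = 0$ otherwise, the Helson matrix $M_\varrho$ has nonzero $(m,n)$-entry only when $mn$ is a power of $p$, which forces both $m$ and $n$ to be powers of $p$, and one checks at once that $M_\varrho$ and $\mathcal{R}_p^\ast H_\gamma \mathcal{R}_p$ agree on the $\langle p\rangle \times \langle p \rangle$ block and vanish elsewhere. This zero-extension manifestly has the same operator norm as $H_\gamma$ and is compact precisely when $H_\gamma$ is, so $\mathcal{R}_p^\ast H_\gamma \mathcal{R}_p$ lies in the domain of $\mathcal{Q}$ and, being a Helson matrix, is fixed by it. Consequently
\[ P(H_\gamma) = \mathcal{R}_p \, \mathcal{Q}(\mathcal{R}_p^\ast H_\gamma \mathcal{R}_p) \, \mathcal{R}_p^\ast = \mathcal{R}_p (\mathcal{R}_p^\ast H_\gamma \mathcal{R}_p) \mathcal{R}_p^\ast = H_\gamma, \]
so $P$ is a bounded projection onto the Hankel matrices in the corresponding class.

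This contradicts Peller's theorems, which assert that no such projection exists in either the compact or bounded setting. Because the same construction handles both the compact and bounded cases simultaneously, Theorem~\ref{thm:boundedcompact} follows. There is no real obstacle in this argument; the only point requiring verification — essentially the single number-theoretic input — is the elementary fact that a zero-extension from $\ell^2(\langle p \rangle)$ to $\ell^2(\mathbb{N})$ of a Hankel matrix is a Helson matrix, a consequence of the trivial implication $mn = p^\kappa \implies m,n \in \langle p \rangle$. This also explains why, in contrast to the analogous $\mathcal{S}_q$ results in Theorems~\ref{thm.1} and~\ref{thm.2}, no multiplicativity hypothesis on a candidate weight is needed here: the reduction to Peller uses only the range of $\mathcal{Q}$, not its specific form.
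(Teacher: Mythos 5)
Your proof is correct, and at its core it follows the same reduction as the paper --- embed the Hankel problem into the Helson problem along a single prime and invoke Peller's Theorems 5.11 and 5.12 from \cite[Ch.~6.5]{Peller03} --- but you run it at a genuinely greater level of generality. The paper's proof treats only projections of the weighted averaging form $\mathcal{P}_\Phi$ from \eqref{proj.Helson3}: taking $p=2$, it forms the same zero-extension $\widetilde{A} = \mathcal{R}_2^\ast A \mathcal{R}_2$ that you do, but then uses the explicit intertwining identity $\mathcal{P}_\Phi \widetilde{A} = \widetilde{P_\varphi A}$ with $\varphi(i,j)=\Phi(2^i,2^j)$, which requires knowing the specific form of the projection. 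You instead conjugate an \emph{arbitrary} bounded projection $\mathcal{Q}$ by $\mathcal{R}_p$ and use only two structural facts: compressions of Helson matrices to the $\langle p\rangle \times \langle p\rangle$ block are Hankel, and --- your key observation, not spelled out in the paper --- zero-extensions of Hankel matrices are themselves Helson matrices (since $mn=p^\kappa$ forces $m,n\in\langle p\rangle$) and hence are fixed by $\mathcal{Q}$. This buys you the theorem as literally stated, for arbitrary projections, whereas the paper's written proof, which opens by assuming the projection is some $\mathcal{P}_\Phi$ satisfying \eqref{eq:avgHelson}, formally covers only the weighted averaging case. One dependency to flag: your argument needs Peller's results in their full strength (no bounded projection of \emph{any} kind onto the compact, resp. bounded, Hankel matrices), while the paper quotes them only for projections of the form $P_\varphi$; Theorems 5.11 and 5.12 in Peller's book are in fact stated for arbitrary bounded projections (the proofs there use an averaging-over-the-torus reduction), so your invocation is legitimate --- and note that when $\mathcal{Q}=\mathcal{P}_\Phi$, your induced projection $P$ reduces exactly to the paper's $P_\varphi$, so your argument strictly subsumes theirs.
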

\begin{proof}
	Clearly, a bounded projection $\mathcal{P}_\Phi$ must satisfy \eqref{eq:avgHelson}. Then $\varphi(i,j)=\Phi(2^i,2^j)$ satisfies \eqref{avg.conditions}. For any compact (resp. bounded) operator $A \colon \ell^2(\mathbb{N}_0)\to\ell^2(\mathbb{N}_0)$ define the operator $\widetilde{A} \colon \ell^2(\mathbb{N})\to\ell^2(\mathbb{N})$ by
	\[\widetilde{a}_{m,n} = 
	\begin{cases}
		a_{i,j} & \text{if } m=2^{i} \text{ and } n=2^{j}, \\
		0 & \text{otherwise}. 
	\end{cases}
	\]
	Since $\mathcal{P}_\Phi \widetilde{A} = \widetilde{P_\varphi A}$, we see that if $\mathcal{P}_\Phi$ acts boundedly on the space of compact (resp. bounded) operators on $\ell^2(\mathbb{N})$, then $P_\varphi$ acts boundedly on the space of compact (resp. bounded) operators on $\ell^2(\mathbb{N}_0)$. However, this is impossible by the results mentioned above. 
\end{proof}
\begin{remark}
	We actually have $\widetilde{A} = A \otimes H_{e_0} \otimes H_{e_0} \otimes \cdots$ as in the proof of Theorem~\ref{thm.2}. 
\end{remark}

\subsection{Duality} We fix $1<q<\infty$ and set $1/q+1/r=1$. It is a standard fact that $(\mathcal{S}_q)^\ast\simeq\mathcal{S}_r$ with respect to the pairing arising from the inner product \eqref{eq:HS} of $\mathcal{S}_2$, i.e. the pairing $\langle A, B \rangle = \operatorname{Tr}(AB^\ast)$ for $A \in \mathcal{S}_q$ and $B \in \mathcal{S}_r$.

Let $H\mathcal{S}_q$ and $M\mathcal{S}_q$ denote the spaces of Hankel matrices and Helson matrices respectively of class $\mathcal{S}_q$. It is well-known that the pairing \eqref{eq:HS} also yields the duality $(H\mathcal{S}_q)^\ast\simeq H\mathcal{S}_r$. Clearly, the map $M_\varrho \mapsto \langle \cdot, M_\varrho \rangle$, is an embedding of $M\mathcal{S}_r$ into $(M\mathcal{S}_q)^\ast$. We now show that in contrast to Hankel matrices, this is not an isomorphism unless $q=2$. 
\begin{corollary}\label{duality} 
	Let $1<q\neq2<\infty$ and set $1/q+1/r=1$. The map $M_\varrho \mapsto \langle \cdot, M_\varrho \rangle$ from $M\mathcal{S}_r$ to $(M\mathcal{S}_q)^\ast$ is not surjective. 
\end{corollary}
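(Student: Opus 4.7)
The plan is to argue by contradiction: if the map $\iota\colon M\mathcal{S}_r \to (M\mathcal{S}_q)^\ast$, $M_\varrho \mapsto \langle \cdot, M_\varrho\rangle$, is surjective, then one can construct a bounded projection from $\mathcal{S}_q$ onto $M\mathcal{S}_q$ and identify it with the averaging projection $\mathcal{P}$, which is impossible by Theorem~\ref{thm.1}. The map $\iota$ is always bounded and injective: injectivity follows by testing against the Helson matrix generated by $\tau_k = \delta_{k,k_0}$, which is finite rank (hence in every $\mathcal{S}_p$) and pairs with $M_\varrho$ as $d(k_0)\overline{\varrho_{k_0}}$. Assuming surjectivity, the open mapping theorem upgrades $\iota$ to a topological isomorphism $M\mathcal{S}_r \simeq (M\mathcal{S}_q)^\ast$.

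Next I would bring in reflexivity. Since $1 < q < \infty$, the space $\mathcal{S}_q$ is reflexive, and $M\mathcal{S}_q$ is a closed (hence reflexive) subspace: closedness is clear because $\mathcal{S}_q$-convergence implies entry-wise convergence, and the property ``$a_{m,n}$ depends only on $mn$'' is preserved under entry-wise limits. Combining reflexivity with the assumed isomorphism yields a canonical isomorphism $M\mathcal{S}_q \simeq (M\mathcal{S}_r)^\ast$, again realised through the trace pairing. For any $A \in \mathcal{S}_q$, the functional $M_\varrho \mapsto \langle A, M_\varrho\rangle$ on $M\mathcal{S}_r$ is bounded, hence of the form $\langle \Pi A, \cdot\rangle$ for a unique $\Pi A \in M\mathcal{S}_q$ satisfying $\|\Pi A\|_{\mathcal{S}_q} \leq C \|A\|_{\mathcal{S}_q}$, where $C$ is controlled by the norm of the isomorphism. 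Uniqueness forces $\Pi A = A$ when $A$ already lies in $M\mathcal{S}_q$, so $\Pi$ is a bounded projection of $\mathcal{S}_q$ onto $M\mathcal{S}_q$.

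To conclude I would identify $\Pi$ with $\mathcal{P}$ on finite-rank matrices. Direct computation gives $\langle e_m \otimes e_n^\ast, M_\varrho\rangle = \overline{\varrho_{mn}}$ and $\langle M_\sigma, M_\varrho\rangle = \sum_k d(k)\sigma_k\overline{\varrho_k}$, so the defining relation $\langle \Pi(e_m \otimes e_n^\ast), M_\varrho\rangle = \overline{\varrho_{mn}}$, tested against $\tau_k = \delta_{k,k_0}$, forces $\sigma_k = d(mn)^{-1}\delta_{k,mn}$. This is precisely the sequence defining $\mathcal{P}(e_m \otimes e_n^\ast)$. By linearity $\Pi$ and $\mathcal{P}$ coincide on the dense subspace of finite-rank matrices; since each matrix entry of $\Pi A$ depends continuously on $A \in \mathcal{S}_q$ (as $\Pi$ is bounded) and so does each entry of $\mathcal{P}A$ (being a finite linear combination of entries of $A$), the equality extends to all of $\mathcal{S}_q$. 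Hence $\mathcal{P}$ is bounded on $\mathcal{S}_q$, contradicting Theorem~\ref{thm.1}.

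The main obstacle is just the duality bookkeeping: converting the surjectivity hypothesis into a norm-controlled representation on the other side and then checking that the projection so produced really is $\mathcal{P}$. Once reflexivity is used to flip the duality, the identification $\Pi = \mathcal{P}$ reduces to the one-line computation on rank-one matrices above, and everything else is routine.
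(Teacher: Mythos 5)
Your proof is correct, and it reaches the contradiction by a route that is essentially the mirror image of the paper's. Both arguments share the skeleton (surjectivity plus the open mapping theorem must produce a bounded projection onto Helson matrices, contradicting Theorem~\ref{thm.1}), but the paper stays entirely on the $\mathcal{S}_r$ side: its key computation is the identity $\langle M_\varrho, A \rangle = \langle M_\varrho, \mathcal{P}A \rangle$, which gives $(M\mathcal{S}_q)^\perp = \operatorname{Ker}\mathcal{P}\cap\mathcal{S}_r$; combined with the standard quotient duality $(M\mathcal{S}_q)^\ast \simeq \mathcal{S}_r/(M\mathcal{S}_q)^\perp$, surjectivity yields the topological decomposition $\mathcal{S}_r = M\mathcal{S}_r \oplus (\operatorname{Ker}\mathcal{P}\cap\mathcal{S}_r)$, and the bounded projection associated with this splitting \emph{is} $\mathcal{P}$ automatically, since $\mathcal{P}$ fixes Helson matrices and annihilates the second summand --- so the contradiction lands on $\mathcal{S}_r$ with no reflexivity, no rank-one computation and no density argument. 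You instead flip the duality using reflexivity of $\mathcal{S}_q$ and closedness of $M\mathcal{S}_q$ to get $M\mathcal{S}_q \simeq (M\mathcal{S}_r)^\ast$, build the projection $\Pi$ on the $\mathcal{S}_q$ side by representing the functionals $M_\varrho \mapsto \langle A, M_\varrho\rangle$, identify $\Pi = \mathcal{P}$ on the matrices $e_m \otimes e_n^\ast$ by testing against $\tau = \delta_{k,k_0}$, and extend by entrywise continuity; the contradiction lands on $\mathcal{S}_q$, which is equally fine since Theorem~\ref{thm.1} covers both conjugate exponents. All the steps you flag as bookkeeping do check out: Hölder gives boundedness of $\iota$, your injectivity test is valid, entrywise limits preserve the Helson property, finitely supported matrices are dense in $\mathcal{S}_q$ for $q<\infty$, and entrywise continuity of both $\Pi$ and $\mathcal{P}$ legitimately upgrades the rank-one identification to all of $\mathcal{S}_q$ (one could also note that $\iota$ is conjugate-linear for the pairing $\operatorname{Tr}(AB^\ast)$, but this is harmless and the paper glosses over it too). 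What the paper's route buys is economy: the annihilator identity makes the identification of the projection with $\mathcal{P}$ a one-liner and avoids reflexivity entirely. What your route buys is explicitness: the projection, its norm control by the isomorphism constant, and the mechanism forcing it to equal $\mathcal{P}$ are all exhibited concretely rather than extracted from the complemented-subspace theorem.
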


Before proceeding, we fix some notation. For a subset $X\subseteq \mathcal{S}_q$, we denote by $X^\perp$ the \emph{annihilator} of $X$ in $\mathcal{S}_r$, i.e. $X^\perp$ consists of all $B\in\mathcal{S}_r$ such that $\langle A, B \rangle =0$ for all $A\in X$. 
\begin{proof}
	First observe that for a Helson matrix $M_\varrho \in\mathcal{S}_q$ and $A=\left(a_{m,n}\right)_{m,n\geq 1}\in\mathcal{S}_r$ we have that
	\[\langle M_\varrho, A \rangle = \sum_{m=1}^\infty \sum_{n=1}^\infty \varrho_{mn} \overline{a_{m,n}} = \sum_{k=1}^\infty d(k) \varrho_k \frac{1}{d(k)} \sum_{mn=k} \overline{a_{m,n}} = \langle M_\varrho, \mathcal{P}A \rangle.\]
	Therefore $(M\mathcal{S}_q)^\perp = \operatorname{Ker} \mathcal{P}\cap\mathcal{S}_r$, where $\mathcal{P}$ is the averaging projection \eqref{proj.Helson}. In particular, this shows that $\operatorname{Ker} \mathcal{P}\cap\mathcal{S}_r$ is a closed subspace of $\mathcal{S}_r$. Suppose that $M_\varrho \mapsto \langle \cdot, M_\varrho \rangle$ is surjective. Then by the open mapping theorem we have the isomorphism $M\mathcal{S}_r\simeq (M\mathcal{S}_q)^\ast$, with the pairing \eqref{eq:HS}. By elementary functional analysis it follows that $M\mathcal{S}_r \simeq \mathcal{S}_r/(\operatorname{Ker} \mathcal{P}\cap\mathcal{S}_r)$ and so
	\[\mathcal{S}_r = M\mathcal{S}_r \oplus (\operatorname{Ker}\mathcal{P}\cap\mathcal{S}_r).\]
	However, this would imply that $\mathcal{P}$ is bounded on $\mathcal{S}_r$ (by e.g.~\cite[Thm.~5.16]{Rudin73}), contradicting Theorem \ref{thm.1}. 
\end{proof}

\bibliographystyle{amsplain} 
\bibliography{helsonproj} 
\end{document}